\def\prepQuaternary{%
    \ifnum\theLevel<4
        \errmessage{Hypotheses missing!}
    \fi%
    \edef\rrcurBox{\thecur{myBox}}
    \edef\rrcurScoreStart{\thecur{myScoreStart}}%
    \edef\rrcurCenter{\thecur{myCenter}}%
    \edef\rrcurScoreEnd{\thecur{myScoreEnd}}%
    \advance\theLevel by-1%
    \edef\rcurBox{\thecur{myBox}}
    \edef\rcurScoreStart{\thecur{myScoreStart}}%
    \edef\rcurCenter{\thecur{myCenter}}%
    \edef\rcurScoreEnd{\thecur{myScoreEnd}}%
    \advance\theLevel by-1%
    \edef\ccurBox{\thecur{myBox}}
    \edef\ccurScoreStart{\thecur{myScoreStart}}%
    \edef\ccurCenter{\thecur{myCenter}}%
    \edef\ccurScoreEnd{\thecur{myScoreEnd}}%
    \advance\theLevel by-1%
    \edef\lcurBox{\thecur{myBox}}
    \edef\lcurScoreStart{\thecur{myScoreStart}}%
    \edef\lcurCenter{\thecur{myCenter}}%
    \edef\lcurScoreEnd{\thecur{myScoreEnd}}%
}
\def\QuaternaryInf$#1\fCenter#2${%
    \prepQuaternary%
    \buildConclusion{#1}{#2}%
    \joinQuaternary%
    \resetInferenceDefaults%
    \ignorespaces%
}
\def\QuaternaryInfC#1{%
    \prepQuaternary%
    \buildConclusionC{#1}%
    \joinQuaternary%
    \resetInferenceDefaults%
    \ignorespaces%
}
\def\joinQuaternary{
    \setbox\myBoxA=\hbox{\theHypSeparation}%
    \lcurScoreEnd=\rrcurScoreEnd%
    \advance\lcurScoreEnd by\wd\rcurBox%
    \advance\lcurScoreEnd by\wd\lcurBox%
    \advance\lcurScoreEnd by\wd\ccurBox%
    \advance\lcurScoreEnd by3\wd\myBoxA%
    \displace=\lcurScoreEnd%
    \advance\displace by -\lcurScoreStart%
    \lcurCenter=.5\displace%
    \advance\lcurCenter by\lcurScoreStart%
    \ifx\rootAtBottomFlag\myTrue%
        \setbox\lcurBox=%
            \hbox{\box\lcurBox\unhcopy\myBoxA\box\ccurBox%
                      \unhcopy\myBoxA\box\rcurBox
                      \unhcopy\myBoxA\box\rrcurBox}%
    \else%
        \htLbox = \ht\lcurBox%
        \htAbox = \ht\myBoxA%
        \htCbox = \ht\ccurBox%
        \htRbox = \ht\rcurBox%
        \htRRbox = \ht\rrcurBox%
        \setbox\lcurBox=%
            \hbox{\lower\htLbox\box\lcurBox%
                  \lower\htAbox\copy\myBoxA\lower\htCbox\box\ccurBox%
                  \lower\htAbox\copy\myBoxA\lower\htRbox\box\rcurBox%
                  \lower\htAbox\copy\myBoxA\lower\htRRbox\box\rrcurBox}%
    \fi%
    \displace=\newCenter%
    \advance\displace by -.5\newScoreStart%
    \advance\displace by -.5\newScoreEnd%
    \advance\lcurCenter by \displace%
    \edef\curBox{\lcurBox}%
    \edef\curScoreStart{\lcurScoreStart}%
    \edef\curScoreEnd{\lcurScoreEnd}%
    \edef\curCenter{\lcurCenter}%
    \joinUnary%
}
\newcounter{sarrow}
\renewcommand{\leq}{\leqslant}
\def\HW{\mbox{\tiny HW}}
\def\subsection{\@startsection{subsection}{3}%
  \z@{.5\linespacing\@plus.7\linespacing}{.3\linespacing}%
  {\bfseries\centering}}
\def\subsubsection{\@startsection{subsubsection}{3}%
  \z@{.5\linespacing\@plus.7\linespacing}{.3\linespacing}%
  {\centering}}
\def\myfnt{\ifx\protect\@typeset@protect\expandafter\footnote\else\expandafter\@gobble\fi}
\theoremstyle{definition}
\newtheorem{theorem}{Theorem}[section]
\newtheorem{definition}[theorem]{Definition}
\newtheorem{lemma}[theorem]{Lemma}
\newtheorem{example}[theorem]{Example}
\newtheorem{corollary}[theorem]{Corollary}
\newtheorem{fact}[theorem]{Fact}
\newcounter{claimcounter}
\numberwithin{claimcounter}{theorem}
\def\presuper#1#2%
\begin{document}

\title{A Logic for Arguing About Probabilities in Measure Teams}
\thanks{The research of the second author was supported by the Finnish Academy of Science and Letters (Vilho, Yrj\"o and Kalle V\"ais\"al\"a foundation).}

\author{Tapani Hyttinen}
\address{Department of Mathematics and Statistics,  University of Helsinki, Finland} 

\author{Gianluca Paolini}
\address{Department of Mathematics and Statistics,  University of Helsinki, Finland} 

\author{Jouko V\"a\"an\"anen}
\address{Department of Mathematics and Statistics,  University of Helsinki, Finland and Institute for logic, Language and Computation, University of Amsterdam, The Netherlands} 

\begin{abstract} We use sets of assignments, a.k.a. teams, and measures on them to define probabilities of first-order formulas in given data. We then axiomatise first-order properties of such probabilities and prove a completeness theorem for our axiomatisation.  We use the Hardy-Weinberg Principle of biology and the Bell's Inequalities of quantum physics as  examples. 
\end{abstract}

\maketitle

\def\ma{\mathcal A}

\section{Introduction}

The logic of propositions with assigned probabilities is usually associated with nondeductive methods  such as inductive reasoning (\cite{MR0040253}). The concept of probability in such an approach is the degree of confirmation or belief. Instead, in this paper we assign probabilities to propositions using  the {\em frequency} interpretation and study properties of such probabilities. Thus, while probability logic usually focuses on the question how to assign probabilities to composite formulas, we focus on the symmetric question how to axiomatise formulas built up from probabilities. To make using the frequency interpretation possible in  defining probabilities we adopt the approach of {\em team semantics} from \cite{vaananen}.  
 
Suppose $\ma$ is a first-order structure with domain $A$. Suppose furthermore $v_0,\ldots,v_n$ are variables that have values in $A$. If we have a set $X$ of assignments of values to $v_0,\ldots,v_n$ in $A$, called a {\em team}, we may ask, what is the probability that a randomly chosen assignment in $X$ satisfies a given first-order formula $\phi(v_0,\ldots,v_n)$ in $\ma$? For this to make perfect sense we need to specify a probability function for relevant subsets of $X$. Our {\em measure teams}
are exactly such teams. In this paper we give axioms for making inferences about first-order properties of such probabilities, and prove the completeness of our axioms.

In the context of experimental science it is natural to consider probabilities of formulas rather than just the truth values true/false. In the world of Big Data this is even more relevant. We suggest  to take the concept of a measure team as a starting point and use it to compute the probabilities of formulas, rather than having the probabilities as given, as in the probability logic of \cite{MR0040253, MR0175755}. In a sense we can argue about the probabilities and have the evidence---the data, or team as we call it---as part of the discussion.

The measure teams that arise from actual experiments are, of course, finite. Indeed, the simplest measure teams consist just of a finite number of assignments of values to fixed variables, as in the table Figure~\ref{dis} of 8 rows of binary data.
\begin{figure}[h]
$$\begin{array}{|c|c|c|c|c|}
\hline
\phantom{a} & v_0 & v_1 & v_2 & v_3 \\
\hline
	0 & 1 & 1 & 0 & 1 \\
	1 & 1 & 1 & 1 & 1 \\
	2 & 1 & 1 & 1 & 1 \\
	3 & 1 & 1 & 1 & 0 \\
	4 & 0 & 0 & 1 & 1 \\
	5 & 0 & 0 & 0 & 0 \\
	6 & 0 & 0 & 0 & 0 \\
	7 & 0 & 0 & 0 & 0 \\	
\hline
\end{array}
$$\caption{A discrete measure team  \label{dis}}\end{figure}	
An example of a finite measure team in biology is a pool of genes. One of the pioneering mathematical results in genetics is the Hardy-Weinberg Theorem which shows that a conservation phenomenon takes place in a gene pool from generation to generation under certain assumptions, such as random mating. The Hardy-Weinberg Theorem is an example
of a property of measure teams that can be expressed and proved in our setup.

Despite the finiteness of teams arising from experiments, we consider in this paper mainly infinite teams, typically continuum size, which abstract away the finiteness of empirical observations. Our Completeness Theorem (Theorem~\ref{main_theorem}) is with respect to infinite measure teams. A paradigm example is an idealised measurement of given variables $v_0,\ldots,v_n$ at all points of time starting at time $0$ and ending at time $1$ (see Figure~\ref{flow}). The values of the variables can be e.g. real numbers which change continuously with time. Thus we have an assignment $s_t$ that depends continuously on time $t$ and interprets variables $v_0,\ldots,v_3$ at every point of time. When time progresses from $0$ to $1$, the vector $(s_t(v_0),\ldots,s_t(v_3))$ flows from $(s_0(v_0),\ldots,s_0(v_3))$ to $(s_1(v_0),\ldots,s_1(v_3))$. It seems appropriate to call such teams {\em continuous teams} as the assignment changes continuously with time. In physical sciences   variables, such as temperature, speed, pressure, amplitude, force, etc, are typically continuous in time. Therefore the concept of continuous team would seem to cover a lot of examples. Continuous teams (when considered with respect to the Lebesgue measure) are examples of measure teams, the topic of this paper.

	\begin{figure}[h]
$$\begin{array}{|c|c|c|c|c|}
\hline
t & v_0 & v_1 & v_2 & v_3 \\
\hline
0	 &s_0(v_0) & s_0(v_1) & s_0(v_2) & s_0(v_3) \\
\vdots	 &\vdots & \vdots & \vdots & \vdots \\
t	 &s_t(v_0) & s_t(v_1) & s_t(v_2) & s_t(v_3) \\
\vdots	 &\vdots & \vdots & \vdots & \vdots \\
1	 &s_1(v_0) & s_1(v_1) & s_1(v_2) & s_1(v_3) \\
\hline
\end{array}
$$\caption{A continuous measure team  \label{flow}}\end{figure}


\section{Measure Teams}	 

	We denote by $\mathrm{Var} = \left\{ v_i \, | \, i < \omega \right\}$ the set of individual first-order variables. 
	
	
	\begin{definition}[Multi-team] A {\em multi-team} $X$ with values in $A$ and domain $\mathrm{dom}(X) \subseteq \mathrm{Var}$ is a pair $(\Omega, \tau)$ such that $\Omega$ is a set and $\tau: \Omega \rightarrow A^{\mathrm{dom}(X)}$ is a function.
\end{definition}

	Given a multi-team $(\Omega, \tau)$, if we put a probability measure on $\Omega$ we get a probabilistic notion of team. Of course, for this definition to be useful one has to put also some measurability conditions on $\tau$. This idea leads to the notion of {\em measure team}, which is the focus of the present paper.

	\begin{definition}[Measure team] Let $L$ be a signature and $\mathcal{A}$ an $L$-structure. A measure team $X$ with values in $\mathcal{A}$ and domain $\mathrm{dom}(X) \subseteq \mathrm{Var}$ is a quadruple $(\Omega, \mathcal{F}, P, \tau)$ such that $(\Omega, \mathcal{F}, P)$ is a probability space and $\tau: \Omega \rightarrow A^{\mathrm{dom}(X)}$ is a measurable function, in the sense that 
	$$\left\{ i \in \Omega \, | \, \mathcal{A} \models_{t(i)} \phi \right\} \in \mathcal{F}$$
for every first-order $L$-formula $\phi$ with free variables in $\mathrm{dom}(X)$.
\end{definition}

	If $X =(\Omega, \mathcal{F}, P, \tau)$ is countable, then the natural choice for $\mathcal{F}$ is $\mathcal{P}(\Omega)$, i.e. the whole power set of $\Omega$, and measurability of $\tau$ is automatically ensured. In the uncountable case, the situation is of course more delicate.

\begin{definition}[Probability]\label{def_prob} Let $L$ be a signature, $\mathcal{A}$ an $L$-structure, $X = (\Omega, \mathcal{F}, P, \tau)$ a measure team with values in $\mathcal{A}$ and $\phi$ a first-order $L$-formula with free variables in $\mathrm{dom}(X)$. We let
			\[ [\phi]_X = P(\left\{ i \in \Omega \, | \, \mathcal{A} \models_{\tau(i)} \phi \right\}) .\]	
\end{definition}

	That is, $[\phi]_X$ is the probability that a randomly chosen assignment from $X$ satisfies $\phi$. Notice that because of the measurability conditions imposed on $\tau$, the above definition makes sense.	
	
\begin{example}\label{boolean}	In Figure \ref{dis} we have an example of a measure team $X = (\Omega, \mathcal{F}, P, \tau)$ with values in the boolean algebra on two elements $\mathcal{A} = (\{0,1\}, 0, \vee, \wedge, \neg)$, where $(\Omega, \mathcal{P}(\Omega), P)$ is the set with eight elements endowed with the normalized counting measure (measure of one point is $\frac{1}{8}$), the domain of $X$ is $\left\{ v_0, v_1, v_2, v_3 \right\}$ and $\tau$ is as in the figure, e.g. $\tau(0)((v_0, v_1, v_2, v_3)) = (1, 1, 0, 1)$. If we consider the variables $v_i$ as propositional variables, then in this case 
	$$ [v_0 \wedge v_1]_X = \frac{1}{2}, $$
because 50\% of the rows satisfy the propositional formula $v_0\wedge v_1$.
We will call measure teams of this particular kind boolean multi-teams. In  \cite{QTL} a system of propositional logic based on boolean multi-teams has been investigated.
\end{example}	

	We denote by $\mathcal{R} = (\mathbb{R}, 0, 1, +, -, \cdot, \leq)$ the ordered field of real numbers, with $\mathcal{L}$ the $\sigma$-algebra of Lebesgue measurable subsets of $[0, 1]$ and with $P$ the Lebesgue measure on $[0, 1]$.

\begin{example} Let $(f_i: [0, 1] \rightarrow \mathbb{R}))_{i < 3}$ be continuous functions, and let $\tau: [0, 1] \rightarrow \mathbb{R}^{\left\{ v_0, v_1, v_2 \right\}}$ so that $\tau(a)(v_i) = f_i(a)$, for $i < 3$. Then $X = ([0, 1], \mathcal{L}, P, \tau)$ is a measure team, which we called above {\em continuous measure team}, with values in $\mathcal{R}$. This follows from elementary properties of continuous functions and elimination of quantifiers for $\mathcal{R}$.
\end{example}


\section{Measure Team Logic}\label{measure_team_logic}

Our measure team logic is concerned with making inferences about the probabilities themselves, not about how probabilities of composite formulas depend on probabilities of the subformulas. 
An example of a valid sentence of our measure team logic is
$$|\phi| = |\phi \wedge \psi| + |\phi \wedge \neg \psi|,$$ where $|\phi|$ denotes the probability of $\phi$ in the team in question. Thus our logic has built in function symbols $+,\cdot$ for expressing arithmetic relations between probabilities.


	We now define {\em measure team logic}. Let $L_0$ be a countable signature, $Q \subseteq \mathbb{R}$ countable and $n \leq \omega$. The intended $Q$ is the set of rational numbers $\mathbb{Q}$ (or even $\mathbb{Q} \cap [0, 1]$). We define the signature $L_Q$ and $L_1$ as follows
	\[ L_Q = \left\{ 0, 1, +, -, \cdot, \leq \right\} \cup \left\{ c_q \, | \, q \in Q \right\}\]
	\[ L_1 = L_1(L_0, n) = L_Q \cup \left\{ |\phi(x)| \; | \; \phi(x) \; \text{$L_0$-formula } x = (v_i)_{i < n} \right\},\]
where the $c_q$ and $|\phi(x)|$ are constant symbols. Note that $|\phi(x)|$ is considered just as a constant symbol, however complicated the formula $\phi(x)$ is. Without loss of generality we may assume that $L_0 \cap L_1 = \emptyset$, this is to avoid possible confusion.

A typical (atomic) formula of our logic is of the form
$$|\phi(x)|=|\psi(x)|$$
with the meaning that a randomly chosen assignment from our team is as likely to satisfy $\phi(x)$ as it is to satisfy $\psi(x)$.
Another typical (atomic) formula  is of the form
$$|\phi(x)|=|\psi(x)|+|\theta(x)|$$
with the meaning that the probability that a randomly chosen assignment from our team satisfies $\phi(x)$ is the sum of the corresponding probabilities for  $\psi(x)$ and $\theta(x)$.

	Given a measure team $X$ with values in $\mathcal{A}$ and $\mathrm{dom}(X) = \left\{ v_i \, | \, i < n \right\}$, we let $\mathcal{R}^{X}_Q$ be the expansion of $\mathcal{R} = (\mathbb{R}, 0, 1, +, -, \cdot, \leq)$ to an $L_1$-structure obtained by interpreting the constant $c_q$ as the real number $q$, and by letting 
	$$|\phi(x)|^{\mathcal{R}^{X}_Q} = [\phi(x)]_X,$$ 
where $[\phi(x)]_X$ is as in Definition \ref{def_prob}. Thus, $[\phi(x)]_X$ is the value of the constant symbol $|\phi(x)|$ in $\mathcal{R}^{X}_Q$.

	\begin{definition}[Semantics] Let $\Sigma$ be an $L_1$-theory, $\mathcal{A}$ an $L_0$-structure, $X$ a measure team with values in $\mathcal{A}$ and $\mathrm{dom}(X) = \left\{ v_i \, | \, i < n \right\}$. We define 
	$$X \models \Sigma \;\; \Leftrightarrow_{\tiny def} \;\; \mathcal{R}^{X}_Q \models \Sigma.$$
\end{definition}

	\begin{definition}[Logical consequence] Let $T$ be an $L_0$-theory and $\Sigma \cup \left\{ \alpha \right\}$ an $L_1$-theory. We define $$(T, \Sigma) \models \alpha$$ if for every $\mathcal{A} \models T$ and every measure team $X$ with values in $\mathcal{A}$ such that $\mathrm{dom}(X) = \left\{ v_i \, | \, i < n \right\}$, we have that 
	$$X \models \Sigma \;\; \Rightarrow \;\; X \models \alpha.$$
\end{definition}

	We now define a deductive system $(T, \Sigma) \vdash \alpha$ with $T$ an $L_0$-theory, $\Sigma$ an $L_1$-theory and $\alpha$ an $L_0$-formula or an $L_1$-formula. Of course what we are really interested in is the case when $\alpha$ is an $L_1$-formula, but for things to work, i.e. to prove completeness, we also have to admit the case in which $\alpha$ is an $L_0$-formula. The deductive system $\vdash$ has three components: $\vdash_0$, $\vdash_1$ and 	
$\vdash_2$. The component $\vdash_0$ allows to deduce $L_0$-formulas from $L_0$-formulas, the component $\vdash_1$ allows to deduce $L_1$-formulas from $L_1$-formulas and the component $\vdash_2$ allows to deduce $L_1$-formulas from $L_0$-formulas. The component $\vdash_0$ is simply the deductive system of first-order logic with respect to $L_0$-formulas. The component $\vdash_1$ is the deductive system of first-order logic with respect to $L_1$-formulas plus the axioms $\mathrm{RCF}^* = \mathrm{Th}(\mathcal{R}_Q)$ (or any axiomatization thereof). Finally, the component $\vdash_2$ consists of three axioms ($A_0$)-($A_2$) and one rule ($R_0$), as below:

\smallskip

\begin{prooftree}
\AxiomC{}
\LeftLabel{($A_0$)}
\UnaryInfC{$|\phi \wedge \neg \phi| = 0$}
\end{prooftree}

\begin{prooftree}
\AxiomC{}
\LeftLabel{($A_1$)}
\UnaryInfC{$|\phi \vee \neg \phi| = 1$}
\end{prooftree}

\begin{prooftree}
\AxiomC{}
\LeftLabel{($A_2$)}
\UnaryInfC{$|\phi \vee \psi| = |\phi| + |\psi| - |\phi \wedge \psi|$}
\end{prooftree}
	
\begin{prooftree}
\AxiomC{$\bigvee_{i < k} \bigwedge_{j < m_i} \forall x (\phi^i_j(x) \rightarrow \psi^i_j(x))$}
\LeftLabel{($R_0$)}
\UnaryInfC{$\bigvee_{i < k} \bigwedge_{j < m_i} (|\phi^i_j(x)| \leq |\psi^i_j(x)|)$}
\end{prooftree}
where in rule ($R_0$) we assume that the formulas $\forall x (\phi^i_j(x) \rightarrow \psi^i_j(x))$ are sentences.

\medskip	

	As an example of the use of our deductive system we show that $$\vdash |\phi| = |\phi \wedge \psi| + |\phi \wedge \neg \psi|.$$ 
	First of all notice that 
		\begin{prooftree}
\AxiomC{$\phi \wedge \psi \wedge \phi \wedge \neg\psi \leftrightarrow \psi \wedge \neg \psi$}
\LeftLabel{($R_0$)}
\UnaryInfC{$|\phi \wedge \psi \wedge \phi \wedge \neg\psi| = |\psi \wedge \neg \psi|$}
\AxiomC{}
\LeftLabel{($A_0$)}
\UnaryInfC{$|\psi \wedge \neg \psi| = 0$}
\BinaryInfC{$|\phi \wedge \psi \wedge \phi \wedge \neg\psi| = 0$}
\end{prooftree}
Secondly, let $\alpha = |(\phi \wedge \psi) \vee (\phi \wedge \neg\psi)| = |\phi \wedge \psi| + |\phi \wedge \neg\psi| - |\phi \wedge \psi \wedge \phi \wedge \neg\psi|$ and notice that
\begin{prooftree}
\AxiomC{}
\LeftLabel{($A_2$)}
\UnaryInfC{$\alpha$}
\AxiomC{$|\phi \wedge \psi \wedge \phi \wedge \neg\psi| = 0$}
\BinaryInfC{$|(\phi \wedge \psi) \vee (\phi \wedge \neg\psi)| = |\phi \wedge \psi| + |\phi \wedge \neg\psi|$}
\end{prooftree}
Finally, we conclude
	\begin{prooftree}
\AxiomC{$\phi \leftrightarrow (\phi \wedge \psi) \vee (\phi \wedge \neg\psi)$}
\LeftLabel{($R_0$)}
\UnaryInfC{$|\phi| = |(\phi \wedge \psi) \vee (\phi \wedge \neg\psi)|$}
\AxiomC{$|(\phi \wedge \psi) \vee (\phi \wedge \neg\psi)| = |\phi \wedge \psi| + |\phi \wedge \neg\psi|$}
\BinaryInfC{$|\phi| = |\phi \wedge \psi| + |\phi \wedge \neg \psi|$}
\end{prooftree}
\medskip

\section{Some examples}

\begin{example}
In this example we look at the usefulness
of quantification when one expresses conditions on probabilities.
%
This example is hypothetical in many senses
but it is faithful to
the calculations of quantum mechanics.

Suppose that we have two observables $v_{1}$ and $v_{2}$
which can take values from the set $\{ 1,2,3,4\}$,
a device that produces particles such that they are all
in the same unknown pure state and that someone has
produced a large table $X$ of measurements of these observables
from the particles produced by the device
(usually it is impossible to measure 
the two observables independently 
from one particle
but we overlook this kind of problems here,
in \cite{QTL} we have studied logical questions related to
the impossibility of experimentally producing tables 
with values for all observables from every particle).

In physics this kind of situation is typically modelled by two
self-adjoint operators in a 4-dimensional Hilbert space.
Let $P$ be the operator for $v_{1}$ and $p(i)$, $i\in\{ 1,2,3,4\}$,
its eigenvectors with eigenvalue $i$. Similarly,
let $Q$ be the operator for $v_{2}$ and  $q(i)$ its eigenvectors.
Notice that when one knows the operators
$P$ and $Q$, it is possible to calculate the coordinates of the vectors
$p(i)$ in the basis of eigenvectors of $Q$.

Can we express in measure team logic 
the condition that the measurements are in harmony with
the theory? Yes, the following is expressible
in our logic: 
there are four complex numbers (pairs of reals)
$c_{n}$, $n\in\{ 1,2,3,4\}$, such that for all
$i\in\{ 1,2,3,4\}$ the following holds:
$\vert c_{i}\vert^{2}=[v_{1}=i]_{X}$ and
$\vert \langle s\vert q(i)\rangle\vert^{2}=[v_{2}=i]_{X}$, where
$\langle \cdot\vert \cdot\rangle $ is the inner product and 
$$s=(1/2)\sum_{n=1}^{4}c_{n}p(n).$$
This is exactly the condition that our data $X$ agrees with the theory.

\end{example}

\begin{example}
This is an example of the use of $T$
in theories $(T,\Sigma )$. We look at homogeneous Markov
chains (see e.g. \cite[pg. 61]{markov_chains}). We think of variables $(v_i)_{i<\omega}$ as random variables and elements of the team $X$ as tests. The value of the random variable $v_j$ in the test $i\in\Omega$ is  $\tau(i)(v_j)$. Figuratively speaking, the team $X$ consists of rows of data concerning the random variables $v_i$.  We give axioms which say that the sequence $(v_i)_{i<\omega}$ of random variables is a Markov process. 
The state space of a Markov process is usually assumed to be
countable which is not a first-order property. However, if one looks at
chains in which the state diagram has some additional properties
(after we remove some of the 
arrows with probability $0$) we can
overcome this problem. The additional property we study here
is that there is some natural number $N$ such that
from each node in the state diagram at most $N$ arrows with non-zero
probability go out. We also assume that the chain has an
initial state from which every process starts.
The main example in our mind of this
is the random walk in a space of dimension $N/2$.

\def\n{\eta}

Markov chains with these properties can be axiomatized as follows
in measure team logic:
The vocabulary of $T$ consists of a binary relation $E$
and constants $c_{\n}$, $\n\in N^{<\omega}$.
The theory $T$ says the following for all
$\n,\xi\in N^{<\omega}$:

(a) $(c_{\n},a)\in E$ iff $a=c_{\n\frown (i)}$ for some $i<N$.

(b) If $c_{\n}=c_{\xi}$ then for all $i<N$,
$c_{\n\frown (i)}=c_{\xi\frown (i)}$.

\noindent
As an initial state we take
(the interpretation of) $c_{\empty}$ and notice that
if $\mathcal{A}\models T$, then the set $G_{\mathcal{A}}$ of
the interpretations of the constants
equipped with $E\cap (G_{\mathcal{A}}\times G_{\mathcal{A}})$ is a
connected directed graph
and every state diagram satisfying our assumptions
(after removing some of the useless arrows) can be obtained from
a model of $T$ in this way.
Also it is worth noticing that
any process that starts from the initial state stays inside $G_{\mathcal{A}}$.

We let $n=\omega$ and describe the probabilities as a Markov process
that starts from the initial state. Thus $\Sigma$ consists of
the following for all $i,j<\omega$, $\n\in N^{<\omega}$ and $k<N$:

(A) $\vert v_{0}=c_{\empty}\vert =1$.

(B) $\vert E(v_{i},v_{i+1})\vert =1$.

(C) $(\vert v_{i}=c_{\n}\vert =0)\vee (\vert v_{i}=c_{\n}\vert =0)
\vee 
\newline \phantom{C} \phantom{C} \phantom{C}\phantom{a\,} 
(\vert v_{i}=c_{\n}\wedge v_{i+1}=
c_{\n\frown (k)}\vert\vert v_{j}=c_{\n}\vert =
\vert v_{j}=c_{\n}\wedge v_{j+1}=
c_{\n\frown (k)}\vert\vert v_{i}=c_{\n}\vert )$.

A team $X$ satisfies $\Sigma$ if and only if the stochastic process consisting of the values of the random variables $(v_i)_{i<\omega}$ in $X$ is a Markov chain.
\end{example}

\begin{example}[The Hardy-Weinberg Principle]

In the early days of biology there was an apparent paradox: It seemed that in any population the dominant alleles should eventually drive out the recessive ones, but this was not supported by observations and experimental data. 
The Hardy-Weinberg Principle (\cite{hardy,weinberg}) explains why in a randomly mating 
population the recessive alleles stabilise to maintain a fixed portion, even after just one generation. 

We consider a diallelic gene with alleles A and a. The logically---but not at all biologically---possible genotypes form the 27 element set
$$M=\{AA,Aa,aa\}\times\{AA,Aa,aa\}\times\{AA,Aa,aa\},$$
where the first component of the triples is the genotype of the father, the second that of the mother and the third that of the child.
%
%
%
%
%
%
\def\mm{\mathcal{M}}
Let $L_0$ be the following signature ($f$ for father, $m$ for mother and $c$ for child)
	\[ \left\{ P^j_k \, | \, j \in \left\{ f, m, c \right\} \text{ and } k \in \left\{ AA, Aa, aa \right\} \right\},\]
where the $P^j_k$ are unary predicate symbols. We get an $L_0$-structure by defining
\[\mm=(M, (P^j_k)^\mm)_{j,k},\]
where 
$$(P^f_k)^\mm=\{(u,v,w) \in M: u=k \},$$
$$(P^m_k)^\mm=\{(u,v,w) \in M: v=k \} \text{ and } (P^c_k)^\mm=\{(u,v,w) \in M: w=k \}.$$
Thus $\mm$ is simply the set $M$ of logically possible genotypes with their internal structure accessible via the predicates $P^j_k$. Now we can look at measure teams of assignments of variables in this structure. We focus on three variables $v_0,v_1$ and $v_2$, representing three generations. Such a measure team can be thought of as genetic data about three generations of a population. We disregard mating across generations, so for us the next generation is always the children. 

More formally, a measure team $X$, relevant for the purpose of the Hardy-Weinberg Principle, is a quadruple $(\{1,\ldots, n\}, \mathcal{P}(\{1,\ldots, n\}), P, \tau)$ such that $P$ is the uniform probability on  $\{1,\ldots, n\}$ and $\tau: \{1,\ldots, n\} \rightarrow M^{\{v_0,v_1,v_2\}}$ is an arbitrary function. For  each $i\in\{1,\ldots,n\}$ the assignment $\tau(i)$ records a father-mother-child triple of the first generation ($v_0$), second generation ($v_1$) and the third generation ($v_2$).


For the language $L_1$ we choose $Q=\mathbb{Q}$. Let $\Sigma_{\HW}$ consist of the below  $L_1$-equations  (\ref{starr})-(\ref{starstarstarstarstar}). Remember that the language $L_1$ contains all the constant symbols $|\phi(x)|$, where $\phi(x)$ is an arbitrary $L_0$-formula. So (\ref{starr})-(\ref{starstarstarstarstar}) are atomic sentences, more exactly equations of constant terms.
	\begin{equation}\label{starr}
	|P^j_k(v_{i+1})| =  |P^c_k(v_i)|, 
\end{equation} 
for $j = f, m$, $k = AA, Aa, aa$ and $i =0, 1$;
  \begin{equation}\label{starstar}
	|P^f_{k}(v_{i+1}) \wedge P^m_{z}(v_{i+1})| = |P^f_{k}(v_{i+1}) \wedge P^m_{z}(v_{i+1}) \wedge P^c_{w}(v_{i+1})|
\end{equation} 
for $(k, z, w) = (AA, AA, AA), (AA, aa, Aa), (aa, AA, Aa), (aa, aa, aa)$ and $i =0, 1$;
  \begin{equation}\label{starstarstar}
	|P^f_{k}(v_{i+1}) \wedge P^m_{l}(v_{i+1})| = 2\cdot |P^f_{k}(v_{i+1}) \wedge P^m_{l}(v_{i+1}) \wedge P^c_{m}(v_{i+1})|
\end{equation} 
for $(k, l, m) = (AA, Aa, AA), (AA, Aa, Aa), (aa, Aa, Aa), (aa, Aa, aa),(Aa, aa, aa),$ $(Aa, AA, Aa), (Aa, aa, Aa), (Aa, AA, AA), (Aa, Aa, Aa)$ and $i =0, 1$;
  \begin{equation}\label{starstarstarstar}
	|P^f_{k}(v_{i+1}) \wedge P^m_{l}(v_{i+1})| = 4\cdot |P^f_{k}(v_{i+1}) \wedge P^m_{l}(v_{i+1}) \wedge P^c_{m}(v_{i+1})|
\end{equation} 
for $(k, l, m) = (Aa, Aa, AA), (Aa, Aa, aa)$ and $i =0, 1$;
	\begin{equation}\label{starstarstarstarstar}
	|P^f_k(v_{i+1}) \wedge P^m_l(v_{i+1})| =  |P^f_k(v_{i+1})| \cdot |P^m_l(v_{i+1})|
\end{equation} 
for $k = AA, Aa, aa$, $l = AA, Aa, aa$ and $i =0, 1$.

The formulas of type (1) express that allele frequencies are equal in the sexes, the formulas of type (2)-(4) specify how the genotypes are inherited, according to Mendel's Principles, and the formulas of type (5) express that mating is random, an important assumption of the Hardy-Weinberg Principle.

Finally, let $\alpha_{\HW}$ be the conjunction of the following $L_1$-equations: 
$$ 
\begin{array}{lcr}
|P^c_{AA}(v_1)| &=& |P^c_{AA}(v_2)|\\ 
|P^c_{Aa}(v_1)| &=& |P^c_{Aa}(v_2)|\\
|P^c_{aa}(v_1)| &=& |P^c_{aa}(v_2)|.
\end{array}$$
These conjuncts say that the genotype frequencies among the children in the second and third generations are the same, i.e. a stable balance achieves already at the second generation.

Thus, all the assumptions of the Hardy-Weinberg Principle are formalizable in our logic. Since the Hardy-Weinberg Principle is true and our logic is complete (see Section \ref{completeness}), it follows that
$$\Sigma_{\HW}\vdash\alpha_{\HW},$$
i.e. our deductive system proves (a formalization) of the Hardy-Weinber Principle.
\end{example}

\begin{example}[Bell's Inequalities]

	 In \cite{QTL}, among other things, we presented a system of probability logic capable to handle so-called {\em logical Bell's inequalities} \cite{abramsky}. 	 
	 Suppose $X=(\Omega, \mathcal{F}, P, \tau)$ is a boolean multi-team (see Example \ref{boolean}) the domain of which contains the proposition symbols of some given propositional formulas
$(\phi_j)_{j < k}$. Then
	
\begin{equation}\label{star}
 \sum_{j < k} [\phi_j]_X \leq k-1 + [\bigwedge_{j < k} \phi_j]_X. 
  \end{equation}
If furthermore  the formula $\bigwedge_{j < k} \phi_j$ is contradictory (in the sense of propositional logic), then $[\bigwedge_{j < k} \phi_j]_X =0$. Thus, the inequality (\ref{star}) becomes
	\begin{equation}\label{starstar} \sum_{j < k} [\phi_j]_X \leq k-1. \end{equation}
	Inequalities of this form (\ref{starstar}) are of great importance in foundations of quantum mechanics, see \cite{abramsky} and \cite{QTL}. Because of the completeness result presented in the next section, we will see that this inequalities are provable in our logic. For suitably chosen propositional formulas
$(\phi_j)_{j < k}$, representing propositions about Quantum Mechanics, the inequality (\ref{starstar}) fails thereby demonstrating the contextuality of probabilities in the quantum world. To remedy this  a {\em quantum team logic} is introduced in \cite{QTL}. In the quantum team logic the problematic inequalities (\ref{starstar}) are not provable but we still have a Completeness Theorem with respect to {\em quantum teams}, a generalization of the concept of a measure team.
\end{example}
	
\section{Completeness}\label{completeness}

	In this section we prove that the deductive system described in Section \ref{measure_team_logic} is complete with respect to the given semantics. We begin with a Lindenbaum's Lemma like result for our deductive system. As in Section \ref{measure_team_logic}, let $L_0$ be a countable signature, $Q \subseteq \mathbb{R}$ countable and $n \leq \omega$.
	
	\begin{lemma}\label{linden} Suppose that $(T, \Sigma) \nvdash \perp$. Then there are a complete $L_0$-theory $T_0$ and a complete $L_1$-theory $\Sigma_0$ such that $T \subseteq T_0$, $\Sigma \subseteq \Sigma_0$ and $(T_0, \Sigma_0) \nvdash \perp$.
\end{lemma}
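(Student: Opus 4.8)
The plan is to run the usual Lindenbaum construction, but carefully interleaved between the two languages $L_0$ and $L_1$, since the deductive system $\vdash$ couples them through the rule $(R_0)$ and the component $\vdash_2$. First I would fix enumerations $(\varphi_m)_{m<\omega}$ of all $L_0$-sentences and $(\beta_m)_{m<\omega}$ of all $L_1$-sentences, and build increasing chains $T = T^0 \subseteq T^1 \subseteq \cdots$ of $L_0$-theories and $\Sigma = \Sigma^0 \subseteq \Sigma^1 \subseteq \cdots$ of $L_1$-theories so that at each finite stage $(T^s, \Sigma^s) \nvdash \perp$. At odd stages I decide the next $L_0$-sentence $\varphi_m$: if $(T^s, \Sigma^s, \varphi_m) \nvdash \perp$ put $\varphi_m$ into $T^{s+1}$, otherwise put $\neg\varphi_m$ in; at even stages I do the same for the next $L_1$-sentence $\beta_m$, adding it or its negation to $\Sigma^{s+1}$. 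Taking unions $T_0 = \bigcup_s T^s$ and $\Sigma_0 = \bigcup_s \Sigma^s$ gives complete theories with $T \subseteq T_0$, $\Sigma \subseteq \Sigma_0$, and $(T_0,\Sigma_0)\nvdash\perp$ by finiteness of derivations.

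The one genuinely non-routine point is the disjunction property one needs to make each step go through: if $(T^s,\Sigma^s,\varphi)\vdash\perp$ and $(T^s,\Sigma^s,\neg\varphi)\vdash\perp$ then $(T^s,\Sigma^s)\vdash\perp$, and likewise for $L_1$-sentences. For the $L_0$-case, since the only ways an $L_0$-hypothesis feeds into a contradiction are via $\vdash_0$ (pure first-order logic, where the property is classical) or via the rule $(R_0)$, one argues that any use of $(R_0)$ can be replicated from the $L_0$-consequences of $T^s$ alone together with whatever $L_0$-sentences were actually needed; so a proof of $\perp$ from $(T^s,\Sigma^s,\varphi)$ together with one from $(T^s,\Sigma^s,\neg\varphi)$ combine, by reasoning by cases inside $\vdash_0$ and pushing the case split through $(R_0)$ (note $(R_0)$'s premise is a disjunction of conjunctions of universal sentences, which is exactly the shape that lets a case split be absorbed), into a proof of $\perp$ from $(T^s,\Sigma^s)$. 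For the $L_1$-case the contradiction is derived inside $\vdash_1$, which extends first-order logic over $L_1$ by the complete theory $\mathrm{RCF}^\ast$, so again the disjunction property is the classical one; the hypotheses from $T$ only enter through $(R_0)$, and since we only added an $L_1$-sentence at this step they are unchanged.

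The main obstacle I anticipate is precisely bookkeeping this interaction cleanly: making sure that $\vdash_2$, in particular the rule $(R_0)$ whose premise lives in $L_0$ and conclusion in $L_1$, does not break the disjunction property, and that axioms $(A_0)$–$(A_2)$ (which are premise-free) cause no trouble. A clean way to package this is to observe that for a \emph{fixed} $L_0$-theory $T'$, the set of $L_1$-sentences $\{\alpha : (T',\Sigma')\vdash\alpha\}$ is closed under $\vdash_1$ and contains a fixed computable set of extra axioms (the instances of $(A_0)$–$(A_2)$ and of $(R_0)$ whose premises are $\vdash_0$-consequences of $T'$), hence is itself just first-order deduction over $L_1$ from an enlarged theory; the disjunction property for $L_1$-sentences then reduces to the classical one. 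Dually, for fixed $\Sigma'$, adding an $L_0$-sentence only affects which instances of $(R_0)$ are available, and those premises are universal-shaped, so the $L_0$ disjunction property survives. Once these two lemmas are in place the construction and the verification of the three displayed conclusions are entirely standard.
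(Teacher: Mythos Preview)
Your proposal is correct and rests on the same core insight as the paper, namely that the rule $(R_0)$ has a premise in $\bigvee\bigwedge$-normal form, so a case split on an $L_0$-sentence $\varphi$ versus $\neg\varphi$ can be absorbed into a single application of $(R_0)$. The organization, however, differs. The paper does \emph{not} interleave: it first completes $T$ to $T_0$ while keeping $\Sigma$ fixed, proving the $L_0$ disjunction property exactly as you sketch (and making explicit one normalization step you only allude to, namely that several applications of $(R_0)$ can be merged into one because $\bigwedge\bigvee\bigwedge$ is provably equivalent to a $\bigvee\bigwedge$ formula). Only after $T_0$ is complete does the paper turn to $\Sigma$: it closes $\Sigma$ under $\mathrm{RCF}^*$ and all $(R_0)$-conclusions whose premises follow from $T_0$, and then invokes the ordinary first-order Lindenbaum lemma for $L_1$. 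This sequential route buys a small simplification: the $L_1$ disjunction property never needs to be checked at an intermediate stage, since by the time one touches $\Sigma$ the $L_0$-side is already frozen and the whole system collapses to classical first-order deduction over $L_1$ from a fixed enlarged axiom set. Your interleaved construction is equally valid, and your packaging of the $L_1$-side (for fixed $T'$, the derivable $L_1$-sentences are just the $\vdash_1$-closure of $\Sigma'$ together with $(A_0)$--$(A_2)$ and the $(R_0)$-conclusions available from $T'$) is a clean way to see that the $L_1$ disjunction property is classical; it simply does a bit more work than strictly necessary.
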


	\begin{proof} We first construct $T_0$ as a limit of a chain $(T^*_i)_{i < \omega}$ of $L_0$-theories. Let $(\phi_i)_{i < \omega}$ be an enumeration of the $L_0$-sentences. By induction on $i < \omega$ we construct $T^*_i$ so that $(T^*_i, \Sigma) \nvdash \perp$ and either $\phi_i \in T^*_{i+1}$ or $\neg \phi_i \in T^*_{i+1}$. If $i = 0$, let $T^*_0 = T$. If $i = j+1$, there are three cases.
\newline {\bf Case 1}. $T^*_j \vdash \phi_j$. Let $T^*_i = T^*_j \cup \left\{ \phi_j \right\}$.
\newline {\bf Case 2}. $T^*_j \vdash \neg \phi_j$. Let $T^*_i = T^*_j \cup \left\{ \neg \phi_j \right\}$.
\newline {\bf Case 3}. $T^*_j \nvdash \phi_j$, i.e. $T^*_j \cup \left\{ \neg \phi_j \right\} \nvdash \perp$, and $T^*_j \nvdash \neg \phi_j$, i.e. $T^*_j \cup \left\{ \phi_j \right\} \nvdash \perp$. For the sake of a contradiction, suppose that 
	\[(T^*_j \cup \left\{ \phi_j \right\}, \Sigma) \vdash \perp \; \text{ and } \; (T^*_j \cup \left\{ \neg \phi_j \right\}, \Sigma) \vdash \perp.\]
We show this is impossible and then extend $T^*_j$ with $\phi_j$ if $(T^*_j \cup \left\{ \phi_j \right\}, \Sigma) \nvdash \perp$, and $\neg \phi_j$ otherwise. Given that $T^*_j \cup \left\{ \phi_j \right\} \nvdash \perp$, there must exists $t < \omega$ so that letting
$$\chi_s = \bigvee_{i < k_s} \bigwedge_{j < m_{(i, s)}} \forall x(\phi^s_{(i,j)} \rightarrow \psi^s_{(i,j)}) \; \text{ and } \; \chi'_s = \bigvee_{i < k_s} \bigwedge_{j < m_{(i, s)}} (|\phi^s_{(i,j)}| \leq |\psi^s_{(i,j)}|),$$
for $s \leq t$, we have that
\def\fcenter{\mbox{$\cdots$}}
\begin{prooftree}
\AxiomC{$T^*_j \cup \left\{ \phi_j \right\}$}
\UnaryInfC{$\chi_0$}
\LeftLabel{($R_0$)}
\UnaryInfC{$\chi'_0$}
\AxiomC{$T^*_j \cup \left\{ \phi_j \right\}$}
\UnaryInfC{$\phantom{\chi_0} \cdots \phantom{\chi_0}$}
\LeftLabel{($R_0$)}
\UnaryInfC{$\phantom{\chi'_0} \cdots \phantom{\chi_0}$}
\AxiomC{$T^*_j \cup \left\{ \phi_j \right\}$}
\UnaryInfC{$\chi_t$}
\LeftLabel{($R_0$)}
\UnaryInfC{$\chi'_t$}
\AxiomC{$\Sigma$}
\QuaternaryInfC{$\perp$}	
\end{prooftree}
Notice though that our deductive system proves that formulas in $\bigwedge \bigvee \bigwedge$ form are equivalent to formulas in $\bigvee \bigwedge$ form, and so we have that

\def\fcenter{\mbox{$\cdots$}}
\begin{prooftree}
\AxiomC{$\chi_0 \, \fcenter \, \chi_t$}
\UnaryInfC{$\bigwedge_{s \leq t} \chi_s$}
\UnaryInfC{$\chi$}
\UnaryInfC{$\bigwedge_{s \leq t} \chi_s$}
\UnaryInfC{$\chi_0 \, \fcenter \, \chi_t$}	
\end{prooftree}
where $\chi$ is the formula in $\bigvee \bigwedge$ form equivalent to $\bigwedge_{s \leq t} \chi_s$. In substance, without loss of generality we can simplify the situation assuming that $t=0$ and thus
\begin{prooftree}
\AxiomC{$T^*_j \cup \left\{ \phi_j \right\}$}
\UnaryInfC{$\bigvee_{i < k_0} \bigwedge_{j < m_{(i, 0)}} \forall x(\phi^0_{(i,j)} \rightarrow \psi^0_{(i,j)})$}
\LeftLabel{($R_0$)}
\UnaryInfC{$\bigvee_{i < k_0} \bigwedge_{j < m_{(i, 0)}} (|\phi^0_{(i,j)}| \leq |\psi^0_{(i,j)}|)$}
\AxiomC{$\Sigma$}
\BinaryInfC{$\perp$}	
\end{prooftree}
Analogously, given that $T^*_j \cup \left\{ \neg \phi_j \right\} \nvdash \perp$, we have that
\begin{prooftree}
\AxiomC{$T^*_j \cup \left\{ \neg \phi_j \right\}$}
\UnaryInfC{$\bigvee_{i < k_1} \bigwedge_{j < m_{(i, 1)}} \forall x(\phi^1_{(i,j)} \rightarrow \psi^1_{(i,j)})$}
\LeftLabel{($R_0$)}
\UnaryInfC{$\bigvee_{i < k_1} \bigwedge_{j < m_{(i, 1)}} (|\phi^1_{(i,j)}| \leq |\psi^1_{(i,j)}|)$}
\AxiomC{$\Sigma$}
\BinaryInfC{$\perp$}	
\end{prooftree}
But then 
\begin{prooftree}
\AxiomC{$T^*_j \cup \left\{ \phi_j \vee \neg \phi_j \right\}$}
\UnaryInfC{$\bigvee_{s < 2, \, i < k_s} \bigwedge_{j < m_{(i, s)}} \forall x(\phi^s_{(i,j)} \rightarrow \psi^s_{(i,j)})$}
\LeftLabel{($R_0$)}
\UnaryInfC{$\bigvee_{s < 2, \, i < k_s} \bigwedge_{j < m_{(i, s)}} (|\phi^s_{(i,j)}| \leq |\psi^s_{(i,j)}|)$}
\AxiomC{$\Sigma$}
\BinaryInfC{$\perp$}	
\end{prooftree}
which contradicts the fact that $(T^*_j, \Sigma) \nvdash \perp$. We now construct $\Sigma_0$. First of all, let $\Sigma'$ be the deductive closure of $\Sigma$ under the axioms $\mathrm{RCF}^*$ and the rule ($R_0$) with premises from $T_0$. Then $(T_0, \Sigma')$ must be consistent because otherwise there would be $i < \omega$ such that $(T^*_i, \Sigma)$ is not consistent. Now, just extend $\Sigma'$ to a complete $L_1$-theory $\Sigma_0$ using the Lindembaum's Lemma of first-order logic. Then $(T_0, \Sigma_0)$ is as wanted.
\end{proof}

	Before proving a completeness result, we need some elementary facts about elementary extensions of the ordered field of real numbers $\mathcal{R} = (\mathbb{R}, 0, 1, +, -, \cdot, \leq)$.
	Let $\mathcal{B}$ be an elementary extension of $\mathcal{R}$. We say that $b \in B$ is finite if there exist $r, s \in \mathbb{R}$ such that $r < b < s$. We denote by $\mathrm{Fin}(\mathcal{B})$ the set of finite elements of $\mathcal{B}$. Given $b \in \mathrm{Fin}(\mathcal{B})$ we denote by $\mathrm{st}(b)$ the standard part of $b$, i.e. $\mathrm{sup}(\{ r \in \mathbb{R} : r < b \})$ (for details see e.g. \cite[Section 5.6]{gold}). By positive bounded formulas we mean formulas which are built up from atomic formulas by means of conjunction $\wedge$, disjunction $\vee$, universal quantification $\forall$ and bounded existential quantification $\exists x(-n \leq  x \leq n \wedge \phi)$.
	
		\begin{fact}\label{positive_fr} Let $\mathcal{B}$ be an elementary extension of $\mathcal{R}$. Then, the map $\mathrm{st}: \mathrm{Fin}(\mathcal{B}) \rightarrow \mathbb{R}$ preserves positive bounded formulas.
\end{fact}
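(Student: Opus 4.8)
The plan is to prove the statement by induction on the complexity of the positive bounded formula, using only the elementary algebraic properties of the standard-part map. First I would record the facts to be imported from \cite[Section 5.6]{gold}: $\mathrm{st}\colon \mathrm{Fin}(\mathcal{B}) \to \mathbb{R}$ is a surjective ring homomorphism restricting to the identity on $\mathbb{R}$, it is order preserving, and in particular it sends nonnegative finite elements to nonnegative reals. I would also make the routine preliminary observation that $\mathrm{st}(t(\bar b)) = t(\mathrm{st}(\bar b))$ for every term $t$ of $\{0,1,+,-,\cdot,\le\}$ and every finite tuple $\bar b$ (immediate induction on $t$, using that $\mathrm{Fin}(\mathcal{B})$ is a subring, so the intermediate values stay finite). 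Here ``$\mathrm{st}$ preserves $\phi$'' is taken to mean: if $\mathcal{B}\models\phi(\bar b)$ with $\bar b$ a tuple of finite elements, then $\mathcal{R}\models\phi(\mathrm{st}(\bar b))$.

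For the base case I would treat the two atomic shapes $t_1 = t_2$ and $t_1 \le t_2$. Equality transfers by applying $\mathrm{st}$ to both sides and invoking the preliminary observation; the inequality transfers by rewriting $\mathcal{B}\models t_1(\bar b)\le t_2(\bar b)$ as $t_2(\bar b)-t_1(\bar b)\ge 0$, noting this element is finite, applying order-preservation of $\mathrm{st}$, and then the preliminary observation again. For the inductive step, $\wedge$ and $\vee$ are immediate from the induction hypotheses. For $\forall y\,\psi$ the key point is that $\mathbb{R}\subseteq B$: given any real $r$ I have $\mathcal{B}\models\psi(r,\bar b)$, and since $r\in\mathrm{Fin}(\mathcal{B})$ with $\mathrm{st}(r)=r$ the induction hypothesis (applied to the finite tuple $(r,\bar b)$) yields $\mathcal{R}\models\psi(r,\mathrm{st}(\bar b))$, so $\psi$ holds in $\mathcal{R}$ of every real. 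For the bounded existential $\exists y\,(-n\le y\le n\wedge\psi)$ I would extract a witness $b\in B$ with $-n\le b\le n$ and $\mathcal{B}\models\psi(b,\bar b)$; the bound forces $b\in\mathrm{Fin}(\mathcal{B})$, so $\mathrm{st}(b)$ exists, lies in $[-n,n]$, and the induction hypothesis applied to $\psi$ finishes the step.

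I do not expect a serious obstacle: this is a routine structural induction. The two features of the formula class that actually do the work — and whose absence would make the statement false — are worth flagging explicitly in the write-up. Negation is disallowed because $\mathrm{st}$ fails to preserve $\ne$ and strict $<$ (an infinitesimal $b>0$ has $\mathrm{st}(b)=0$), and existential quantification must be bounded so that a witness in $\mathcal{B}$ is automatically finite and hence admits a standard part, whereas an unbounded $\exists$ could be witnessed only by an infinite element. The only genuinely external ingredients are the algebraic properties of $\mathrm{st}$ recalled at the outset; no quantifier elimination or other structure theory of $\mathcal{R}$ is needed here.
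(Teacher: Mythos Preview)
Your proposal is correct and follows essentially the same approach as the paper: a straightforward induction on the complexity of positive bounded formulas, with the atomic case handled via the ring-homomorphism and order-preservation properties of $\mathrm{st}$ (the paper simply cites \cite[Theorem 5.6.2]{gold} and \cite[Theorem 6.7]{loeb} for this step). Your write-up is considerably more explicit than the paper's sketch---in particular you spell out the universal and bounded-existential steps and explain why the restrictions on the formula class are necessary---but the underlying argument is the same.
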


	\begin{proof} This can be proved by a straightforward induction on the complexity of positive formulas. Only the atomic case is interesting. For it see e.g. \cite[Theorem 6.7]{loeb} or \cite[Theorem 5.6.2]{gold} (in \cite{loeb} and \cite{gold} proofs are with respect to the hyperreals but they work for any elementary extension of $\mathcal{R}$). 
\end{proof}

	\begin{theorem}[Completeness]\label{main_theorem} Let $T$ be an $L_0$-theory and $\Sigma$ a positive bounded $L_1$-theory. Then the following are equivalent.
	\begin{enumerate}[(i)]
	\item There exists $\mathcal{A} \models T$ and a measure team $X = (\Omega, \mathcal{F}, P, \tau)$ with values in $\mathcal{A}$ and $\mathrm{dom}(X) = \left\{ v_i \, | \, i < n \right\}$, such that $X \models \Sigma$.
	\item $(T, \Sigma) \nvdash \perp$.
	\item As in (i), with $\Omega = [0, 1]$, $\mathcal{F}$ the $\sigma$-algebra $\mathcal{L}$ of Lebesgue measurable subsets of $[0, 1]$ and $P$ the Lebesgue measure on $[0, 1]$.
\end{enumerate}
\end{theorem}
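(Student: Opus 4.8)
The plan is to prove the cycle $(i)\Rightarrow(ii)\Rightarrow(iii)\Rightarrow(i)$, of which $(iii)\Rightarrow(i)$ is trivial. For $(i)\Rightarrow(ii)$ it suffices to verify soundness: whenever $\mathcal{A}\models T$ and $X\models\Sigma$, the pair $(\mathcal{A},\mathcal{R}^{X}_{Q})$ satisfies every formula derivable by $\vdash$, and in particular not $\perp$. The component $\vdash_0$ is sound since $\mathcal{A}\models T$; the component $\vdash_1$ is sound since $\mathcal{R}^{X}_{Q}$ is an expansion of a model of $\mathrm{RCF}^{*}=\mathrm{Th}(\mathcal{R}_Q)$ and $\mathcal{R}^{X}_{Q}\models\Sigma$; the axioms $(A_0),(A_1),(A_2)$ hold in $\mathcal{R}^{X}_{Q}$ by $P(\emptyset)=0$, $P(\Omega)=1$ and finite inclusion--exclusion for probability measures; and $(R_0)$ is sound because if $\mathcal{A}\models\bigvee_i\bigwedge_j\forall x(\phi^i_j\to\psi^i_j)$ then for some $i$ and every $j$ the subset of $\Omega$ defined by $\phi^i_j$ is contained in the one defined by $\psi^i_j$, so monotonicity of $P$ gives $[\phi^i_j(x)]_{X}\le[\psi^i_j(x)]_{X}$.

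For $(ii)\Rightarrow(iii)$, first apply Lemma~\ref{linden} to obtain a complete $L_0$-theory $T_0\supseteq T$ and a complete $L_1$-theory $\Sigma_0\supseteq\Sigma$ with $(T_0,\Sigma_0)\nvdash\perp$; since $\Sigma_0$ is complete and consistent with $T_0$ and since $(A_0)$--$(A_2)$ and $\mathrm{RCF}^{*}$ are provable, $\Sigma_0$ contains all of these, as well as every conclusion of $(R_0)$ whose premise is provable from $T_0$. Take an $\aleph_1$-saturated model $\mathcal{A}\models T_0$, so that $\mathcal{A}$ realizes every complete type in the variables $(v_i)_{i<n}$ consistent with $T_0$. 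Take also a model $\mathcal{B}\models\Sigma_0$; by adjoining to $\Sigma_0$ the elementary diagram of $\mathcal{R}_Q$ and using compactness (each finite fragment of that diagram is satisfiable by suitably interpreting its new constants in any model of $\Sigma_0$, because the existential closure of the fragment lies in $\mathrm{Th}(\mathcal{R}_Q)\subseteq\Sigma_0$) we may assume that the reduct of $\mathcal{B}$ to $L_Q$ is an elementary extension of $\mathcal{R}_Q$; in particular $c_q^{\mathcal{B}}=q$, and Fact~\ref{positive_fr} applies to the standard part map $\mathrm{st}$ on $\mathrm{Fin}(\mathcal{B})$. As $\Sigma_0$ is deductively closed, $0\le|\phi(x)|^{\mathcal{B}}\le1$ for every $L_0$-formula $\phi(x)$, so $p_\phi:=\mathrm{st}(|\phi(x)|^{\mathcal{B}})\in[0,1]$ is well defined.

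Next I would manufacture a Lebesgue measure team from the data $(p_\phi)_\phi$. Let $B_n$ be the (countable) Lindenbaum--Tarski algebra of $L_0$-formulas in the variables $(v_i)_{i<n}$ modulo provable equivalence over $T_0$. The assignment $\mu([\phi]):=p_\phi$ is well defined (by $(R_0)$), satisfies $\mu(1)=1$, and is finitely additive: $(A_0),(A_1),(A_2)$ translate into $\mathcal{B}$, where they survive $\mathrm{st}$ since $\mathrm{st}$ is additive on finite elements. Let $S_n$ be the Stone space of $B_n$; it is compact metrizable, and its clopen sets are exactly the $[\phi]=\{p\in S_n:\phi\in p\}$. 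On a compact space a finitely additive measure on the clopen algebra is automatically countably additive there (a clopen set partitioned into clopen pieces is in fact partitioned into finitely many of them), hence by the Carath\'eodory extension theorem $\mu$ extends to a Borel probability measure $\widehat\mu$ on $S_n$. Since $S_n$ is Polish, fix a Borel map $f:[0,1]\to S_n$ with $f_{*}(P)=\widehat\mu$, where $P$ is Lebesgue measure; choose for each $p\in S_n$ a realization $g(p)\in A^{\{v_i\mid i<n\}}$ in $\mathcal{A}$; and put $\tau=g\circ f$. Then $\{t\in[0,1]:\mathcal{A}\models_{\tau(t)}\phi\}=f^{-1}([\phi])$ is Borel with $P$-measure $\widehat\mu([\phi])=p_\phi$, so $X=([0,1],\mathcal{L},P,\tau)$ is a measure team with values in $\mathcal{A}$, $\mathrm{dom}(X)=\{v_i\mid i<n\}$, and $[\phi(x)]_{X}=p_\phi$.

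Finally, to see $X\models\Sigma$, note that $\mathcal{R}^{X}_{Q}$ interprets each $L_1$-constant as precisely the standard part of its interpretation in $\mathcal{B}$ (the constants $0,1$, the $c_q$ as $q$, and $|\phi(x)|$ as $[\phi(x)]_{X}=p_\phi=\mathrm{st}(|\phi(x)|^{\mathcal{B}})$). Hence, for a positive bounded $L_1$-sentence $\sigma\in\Sigma$, replacing every $L_1$-constant occurring in $\sigma$ by a fresh variable turns $\sigma$ into a positive bounded formula in the language of $\mathcal{R}$, and $\mathcal{B}\models\sigma$ together with Fact~\ref{positive_fr} yields $\mathcal{R}^{X}_{Q}\models\sigma$. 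This is where positive boundedness of $\Sigma$ is essential, and the whole passage from the (possibly nonstandard) model $\mathcal{B}$ to a genuine Lebesgue team is the main obstacle: one must read off a merely finitely additive measure on the type space, use compactness of the Stone space to promote it to a countably additive Borel measure, realize it as a push-forward of Lebesgue measure, and choose realizations inside a sufficiently saturated $\mathcal{A}$ so that the resulting $\tau$ meets the measurability requirement while still yielding exactly the probabilities $p_\phi$.
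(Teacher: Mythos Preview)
Your proof is correct, and for $(ii)\Rightarrow(iii)$ it takes a genuinely different route from the paper. After the common initial steps (Lemma~\ref{linden}, a model $\mathcal{B}\models\Sigma_0$ with $\mathcal{R}_Q\preccurlyeq\mathcal{B}\!\restriction\! L_Q$, the standard-part values $p_\phi=\mathrm{st}(|\phi(x)|^{\mathcal{B}})$, and an $\omega$-saturated $\mathcal{A}\models T_0$), the paper builds the team by hand: it enumerates the $L_0$-formulas $(\phi_i)_{i<\omega}$, labels the nodes of $2^{<\omega}$ with nested half-open subintervals of $[0,1)$ whose lengths are dictated by the numbers $p_{\phi_0\wedge\cdots}$, and reads off from each $s\in[0,1)$ the unique branch $f_s\in 2^\omega$ containing it; the associated type $\{\phi_i^{f_s(i)}:i<\omega\}$ is then realized in $\mathcal{A}$ and that realization is $\tau(s)$. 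You instead package the same data abstractly: the $p_\phi$ give a finitely additive probability on the Lindenbaum--Tarski algebra $B_n$, compactness of the Stone space $S_n$ upgrades this to a $\sigma$-additive premeasure on the clopen algebra, Carath\'eodory extends it to a Borel probability $\widehat\mu$ on $S_n$, and a standard descriptive-set-theoretic fact represents $\widehat\mu$ as a push-forward of Lebesgue measure via a Borel $f:[0,1]\to S_n$; composing with a choice of realizations $g:S_n\to A^{\{v_i:i<n\}}$ yields $\tau$. The paper's argument is entirely elementary and self-contained (no Stone duality, no Carath\'eodory, no push-forward lemma); yours is more conceptual, makes transparent that what is really being constructed is a probability measure on the type space, and modularizes the passage to $[0,1]$ as a black-box citation. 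Both arrive at exactly the same $[\phi(x)]_X=p_\phi$ and then invoke Fact~\ref{positive_fr} in the same way to conclude $X\models\Sigma$.
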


\begin{proof} We only prove (ii) implies (iii). Suppose that $(T, \Sigma) \nvdash \perp$. By Lemma \ref{linden} we can find a complete $L_0$-theory $T_0$ and complete $L_1$-theory $\Sigma_0$ such that $T \subseteq T_0$, $\Sigma \subseteq \Sigma_0$ and $(T_0, \Sigma_0) \nvdash \perp$. In particular, the theories $T_0$ and $\Sigma_0$ are consistent (with respect to the deductive system of first-order logic), because otherwise we would be able to derive a contradiction also from our deductive system. Let then $\mathcal{B} \models \Sigma_0$. Given that our deductive system contains $\mathrm{RCF}^* = \mathrm{Th}(\mathcal{R}_Q)$,  we can---without loss of generality---assume that $\mathcal{R}_Q \preccurlyeq \mathcal{B} \restriction L_Q$. To see this, just take a sufficiently saturated elementary extension of $\mathcal{R}_Q$ and think of $\Sigma_0$ as a type of the theory $\mathrm{RCF}^* \subseteq \Sigma_0$ in the variables $|\phi(x)|$. We now expand $\mathcal{R}_Q$ to an $L_1$-structure by letting 
	$$ |\phi(x)|^{\mathcal{R}^{X}_Q} = \mathrm{st}(|\phi(x)|^{\mathcal{B}}) $$
for every $L_0$-formula $\phi(x)$ in the free variables $x = (v_i)_{i < n}$. By Fact \ref{positive_fr} we have  ${\mathcal{R}^{X}_Q} \models \Sigma$. We now want to define $\mathcal{A} \models T_0$ as well as  a measure team $X = ([0, 1], \mathcal{L}, P, \tau)$ with values in $\mathcal{A}$ and $\mathrm{dom}(X) = \left\{ v_i \, | \, i < n \right\}$, so that $$|\phi(x)|^{\mathcal{R}^{X}_Q} = [\phi(x)]_X$$ for every $L_0$-formula $\phi(x)$ in the free variables $x$. As to $\mathcal{A}$, we can let it be any $\omega$-saturated model of $T_0$ ($\omega$-compactness suffices if $n < \omega$). As to the team $X$, we do the following. Let $(\phi_i(x))_{i < \omega}$ be an enumeration of the $L_0$-formulas in the free variables $x$ (remember that $x$ is a vector). We label $2^{< \omega}$ with subsets $I_{\sigma} \subseteq [0, 1)$ as in Figure \ref{tree} (where for simplicity we let $|\phi| = |\phi(x)|^{\mathcal{R}^{X}_Q}$, $\phi_{(1, 1)} = \phi_0 \wedge \phi_1$ and $\phi_{(0, 1)} = \neg\phi_0 \wedge \phi_1$).

\begin{figure}[ht]
	\begin{center}
	\begin{tikzpicture}[level distance=1.5cm,
  level 1/.style={sibling distance=5.85cm},
  level 2/.style={sibling distance=3cm}]
  level 3/.style={sibling distance=0.1cm}]
  \node {$\emptyset$}
    child {node {$[0, |\phi_0|)$}
    child {node {$[0, |\phi_{(1, 1)}|)$}
    child {node {$\cdots$}}
    child {node {$\cdots$}}}
    child {node {$[|\phi_{(1, 1)}|, |\phi_0|)$}
    child {node {$\cdots$}}
    child {node {$\cdots$}}}
    }
    child {node {$[|\phi_0|, 1)$}
    child {node {$[|\phi_0|, |\phi_0| + |\phi_{(0, 1)}|)$}
    child {node {$\cdots$}}
    child {node {$\cdots$}}}
    child {node {$[|\phi_0| + |\phi_{(0, 1)}|, 1)$}
    child {node {$\cdots$}}
    child {node {$\cdots$}}}
    };
\end{tikzpicture}
\end{center}  \caption{Labelling $2^{< \omega}$ with $I_{\sigma} \subseteq [0, 1)$}\label{tree}
\end{figure}
Because of ($A_0$)-($A_2)$ and $(R_0)$, given $s \in [0, 1)$ and $1 \leq i < j < \omega$, we have $s \in I_{\sigma_i} \wedge I_{\sigma_j}$ for unique $\sigma_i \in 2^i$ and $\sigma_j \in 2^j$, and moreover $\sigma_i \subseteq \sigma_j$. Thus, to every $s \in [0, 1)$ we can associate 
	$$f_s = \bigcup_{1 \leq i < \omega}\sigma_i \in 2^{\omega}.$$
Let 
	$$ \mathrm{tp}({f_s}) = \mbox{\Large $\{$ } \!\!\! \bigwedge_{i < m} \phi_i^{f_s(i)}(x) \, | \,1 \leq m < \omega \mbox{\Large $\}$.}$$
Then, because of ($A_0$) and $(R_0)$, the type $\mathrm{tp}({f_s})$ is finitely satisfiable and hence satisfiable in $\mathcal{A}$ ($\omega$-saturated models realize types over the empty set in infinitely many variables). Let then $a_s \in A$ such that $a_s \models \mathrm{tp}({f_s})$ and 
	\[ \tau(s)(x) = \begin{cases} a \in A^{|x|} \;\;\;\;\; \text{ if } s = 1 \\
						  a_s \;\;\;\;\;\;\;\;\;\;\;\;\;\; \text{ if } s \in [0, 1). \end{cases} \]
Then $X = ([0, 1], \mathcal{L}, P, \tau)$ is as desired.
\end{proof}

	The following standard counterexample shows that the positivity of $\Sigma$ is a necessary condition in Theorem \ref{main_theorem}.
	
	\begin{example} Let $L_0$ consists of a single unary predicate $R$, $T$ be the empty theory and
	\[ \Sigma = \left\{ |R(x)| > 0 \right\} \cup \mbox{\Large $\{$ } \!\!\! |R(x)| \leq \frac{1}{n} \, | \, 0 < n < \omega \mbox{\Large $\}$.}\]
	Then $(T, \Sigma) \nvdash \perp$, because $\Sigma$ is finitely satisfiable, but (i) of Theorem \ref{main_theorem} fails, as in fact there is no way to expand $\mathcal{R}_Q$ to an $L_1$-structure $\mathcal{R}^X_Q$ so that $\mathcal{R}^X_Q \models \Sigma$.
\end{example}

	On the other hand, if we insist on $\Sigma$ being finite we can prove Theorem \ref{main_theorem} without the positive bounded assumption.

	\begin{theorem} As in Theorem \ref{main_theorem} with $\Sigma$ finite and arbitrary, i.e. not necessarily positive bounded. 
\end{theorem}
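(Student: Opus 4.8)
The plan is to re-run the proof of Theorem~\ref{main_theorem} up to the point where one has a complete $L_0$-theory $T_0\supseteq T$, a complete $L_1$-theory $\Sigma_0\supseteq\Sigma$ with $(T_0,\Sigma_0)\nvdash\perp$, and a model $\mathcal{B}\models\Sigma_0$ with $\mathcal{R}_Q\preccurlyeq\mathcal{B}\restriction L_Q$ (this reduction, via Lemma~\ref{linden}, uses nothing about positivity), and then to replace the standard-part step---which needed Fact~\ref{positive_fr}, hence positivity---by an elementarity argument exploiting finiteness. As in Theorem~\ref{main_theorem} it is enough to prove (ii)$\Rightarrow$(iii), the implications (iii)$\Rightarrow$(i) and (i)$\Rightarrow$(ii) being exactly as there. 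Since $\Sigma$ is finite, $\sigma:=\bigwedge\Sigma$ is a single $L_1$-sentence mentioning only finitely many of the constants $|\phi(x)|$, say $|\phi_1(x)|,\dots,|\phi_k(x)|$; letting $\sigma'(y_1,\dots,y_k)$ be the $L_Q$-formula obtained from $\sigma$ by turning each $|\phi_i(x)|$ into a fresh variable $y_i$, we have $\sigma=\sigma'(|\phi_1(x)|,\dots,|\phi_k(x)|)$ literally (one could alternatively first replace $\sigma'$ by an equivalent quantifier-free formula using quantifier elimination for $\mathrm{RCF}^*$, but this is not needed).

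The core step is to find \emph{real} values for these $k$ probabilities. Put $b_i:=|\phi_i(x)|^{\mathcal{B}}$ and, for $S\subseteq\{1,\dots,k\}$, $\theta_S(x):=\bigwedge_{i\in S}\phi_i(x)\wedge\bigwedge_{i\notin S}\neg\phi_i(x)$; let $\mathcal{C}_0$ be the set of $S$ with $\theta_S$ consistent with $T_0$, and for $S\in\mathcal{C}_0$ let $z_S:=|\theta_S(x)|^{\mathcal{B}}$. Using only $(A_0)$--$(A_2)$ and $(R_0)$ applied to suitable propositional tautologies (which, in the single tuple $x$, are sentences), together with the fact that $\forall x(\theta_S(x)\to(\phi_1(x)\wedge\neg\phi_1(x)))$ is provable from $T_0$ when $S\notin\mathcal{C}_0$, one checks exactly as in the Section~\ref{measure_team_logic} computation of $\vdash|\phi|=|\phi\wedge\psi|+|\phi\wedge\neg\psi|$ that $z_S\geq 0$, $\sum_{S\in\mathcal{C}_0}z_S=1$, and $b_i=\sum_{S\in\mathcal{C}_0,\,i\in S}z_S$ for each $i$; all of these are $\vdash$-consequences of $(T_0,\Sigma_0)$, hence lie in $\Sigma_0$, hence hold in $\mathcal{B}$. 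In other words $(b_1,\dots,b_k)$ lies in the convex hull of $\{e_S:S\in\mathcal{C}_0\}$, where $e_S\in\{0,1\}^k$ has $i$-th coordinate $1$ iff $i\in S$. Therefore $\mathcal{B}\restriction L_Q$ satisfies the $L_Q$-sentence
$$\exists y_1\cdots\exists y_k\,\Big(\sigma'(y_1,\dots,y_k)\;\wedge\;\exists(z_S)_{S\in\mathcal{C}_0}\,\big(\textstyle\bigwedge_{S}z_S\geq 0\;\wedge\;\sum_{S}z_S=1\;\wedge\;\bigwedge_{i=1}^{k}y_i=\sum_{S\ni i}z_S\big)\Big),$$
and since $\mathcal{R}_Q\preccurlyeq\mathcal{B}\restriction L_Q$ this sentence holds in $\mathcal{R}_Q$, yielding real $r_1,\dots,r_k$ with $\mathcal{R}_Q\models\sigma'(r_1,\dots,r_k)$ and a representation $(r_1,\dots,r_k)=\sum_{S\in\mathcal{C}_0}\lambda_S e_S$ with $\lambda_S\geq 0$, $\sum_S\lambda_S=1$.

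Finally, realise $r_1,\dots,r_k$ by a Lebesgue team. Discard the $S$ with $\lambda_S=0$; let $\mathcal{A}$ be an $\omega$-saturated model of $T_0$ ($\omega$-compactness suffices if $n<\omega$); for each remaining $S$ pick a complete $n$-type $p_S$ over $\emptyset$ consistent with $T_0$ and containing $\theta_S$, realised by a tuple $a_S$ in $\mathcal{A}$ (using, as in the proof of Theorem~\ref{main_theorem}, that $\omega$-saturated models realise such types even in infinitely many variables); partition $[0,1)$ into intervals $J_S$ with $P(J_S)=\lambda_S$, set $\tau(s):=a_S$ for $s\in J_S$ and $\tau(1)$ arbitrary, and let $X=([0,1],\mathcal{L},P,\tau)$. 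For every $L_0$-formula $\phi$ with free variables among $x$, the set $\{s:\mathcal{A}\models_{\tau(s)}\phi\}$ is the union of those $J_S$ with $\phi\in p_S$, hence a finite union of intervals and Lebesgue measurable, so $X$ is a measure team with $\mathrm{dom}(X)=\{v_i:i<n\}$ and $[\phi_i(x)]_X=\sum_{S\ni i}\lambda_S=r_i$. Since $\mathcal{R}^X_Q$ extends $\mathcal{R}_Q$, models $\mathrm{RCF}^*$, and interprets $|\phi_i(x)|$ as $r_i$, we get $\mathcal{R}^X_Q\models\sigma'(r_1,\dots,r_k)$, i.e. $\mathcal{R}^X_Q\models\sigma$, i.e. $X\models\Sigma$; this is (iii).

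The delicate point---and the reason finiteness is essential---is the passage from $\mathcal{B}$ to genuine reals. For infinite $\Sigma$ one cannot fold the theory into a single $L_Q$-sentence, and the displayed counterexample shows the infinitely many constraints may be finitely satisfiable yet have no common real solution, the obstruction being that the offending constant is infinitesimal in \emph{every} $\mathcal{B}\models\Sigma_0$. Once $\Sigma$ is finite, $\sigma$ is one semialgebraic condition on finitely many reals, and elementarity of $\mathcal{R}_Q\preccurlyeq\mathcal{B}\restriction L_Q$ transfers its joint satisfiability---conjoined with the coherence constraints supplied by $(A_0)$--$(A_2)$/$(R_0)$---down to $\mathbb{R}$. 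The only real bookkeeping is verifying those coherence constraints in $\mathcal{B}$, which is the same iterated use of $(A_2)$, $(R_0)$ and the completeness of $T_0,\Sigma_0$ (so that inconsistent atoms provably get probability $0$) already illustrated in Section~\ref{measure_team_logic}.
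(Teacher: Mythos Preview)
Your proof is correct and shares the paper's core idea: since $\Sigma$ is finite it mentions only finitely many constants $|\phi_i(x)|$, so one can package $\Sigma$ together with enough $(A_0)$--$(A_2)$/$(R_0)$-derivable coherence constraints into a single $L_Q$-sentence, transfer its satisfiability from $\mathcal{B}$ down to $\mathcal{R}_Q$ by elementarity, and then realise the resulting real values by a Lebesgue team.

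The execution differs in emphasis. The paper closes $\Sigma$ under the four schemata $0\le|\phi|\le 1$, $|\neg\phi|=1-|\phi|$, $|\phi|=|\phi\wedge\psi|+|\phi\wedge\neg\psi|$, $|\phi\vee\psi|=|\phi|+|\psi|-|\phi\wedge\psi|$ (and the clauses $|\phi|=0$ coming from $\Sigma_0$), forms $\bigwedge\Sigma''$, existentially quantifies over the finitely many probability constants, transfers, and then reuses the tree construction of Theorem~\ref{main_theorem} restricted to the finitely many $L_0$-formulas appearing. You instead pass directly to the Boolean atoms $\theta_S$, record in the existential sentence the convex-combination representation $y_i=\sum_{S\ni i}z_S$, $\sum z_S=1$, $z_S\ge 0$, and then partition $[0,1)$ into intervals of length $\lambda_S$. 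Your route is slightly more self-contained (the realisability of the real solution by a team is visibly guaranteed by the convex constraints), while the paper's route economises by pointing back to the finite-tree machinery already set up. Both arguments hinge on the same derivability facts---in particular that $|\theta_S|=0$ for $S\notin\mathcal{C}_0$ and the iterated additivity identities---and on completeness of $T_0,\Sigma_0$ to place these in $\Sigma_0$.
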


\begin{proof} The proof is essentially as in Theorem \ref{main_theorem}. We only have to specify how to define $|\phi(x)|^{\mathcal{R}^{X}_Q}$ in this case. We do this. Let $\Sigma_0$ and $\mathcal{B} \models \Sigma_0$ as in the proof of Theorem \ref{main_theorem}. First of all, extend $\Sigma$ to a $\Sigma'$ adding the following formulas for every $|\phi(x)|$ and $|\psi(x)|$ occurring in $\Sigma$:
\begin{enumerate}[(i)]
\item $0 \leq |\phi(x)| \leq 1$;
\item $|\neg \phi(x)| = 1 - |\phi(x)|$;
\item $|\phi(x)| = |\phi(x) \wedge \psi(x)| + |\phi(x) \wedge \neg \psi(x)|$;
\item $|\phi(x) \vee \psi(x)| = |\phi(x)| + |\psi(x)| - |\phi(x) \wedge \psi(x)|$.
\end{enumerate} 
Further extend $\Sigma'$ to a $\Sigma''$ requiring that if $|\phi(x)| = 0 \in \Sigma_0$ and $|\phi(x)|$ occurs in $\Sigma$, then $|\phi(x)| = 0 \in \Sigma''$.
Items (i)-(iv) are theorems of our logic, and so $\Sigma'' \subseteq \Sigma_0$.
Secondly, notice that it suffices to specify $|\phi(x)|^{\mathcal{R}^{X}_Q}$ only for the $|\phi(x)|$ occurring in $\Sigma''$, but this is easily done---just consider $\bigwedge \Sigma''$, substitute constants of the form $|\phi(x)|$ with free variables, quantify existentially and find a real solution using the fact that $\mathcal{R}_Q \preccurlyeq \mathcal{B} \restriction L_Q$. The rest of the proof is clear (in this case enumerate only the $L_0$-formulas that occur in $\Sigma$ and construct a finite tree).
\end{proof}

	\begin{corollary} Let $T$ be an $L_0$-theory and $\Sigma \cup \left\{ \alpha \right\}$ a finite $L_1$-theory. Then
	\[ (T, \Sigma) \vdash \alpha \;\; \Leftrightarrow \;\; (T, \Sigma) \models \alpha \]
\end{corollary}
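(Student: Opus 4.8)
The plan is to deduce the corollary from the two preceding theorems by the usual soundness-plus-completeness argument, reducing the biconditional to each direction separately. For the direction $(T,\Sigma)\vdash\alpha \Rightarrow (T,\Sigma)\models\alpha$ (soundness), I would proceed by induction on the length of a derivation in $\vdash = \vdash_0 \cup \vdash_1 \cup \vdash_2$. The components $\vdash_0$ and $\vdash_1$ are first-order logic (over $L_0$ and $L_1$ respectively, the latter augmented with $\mathrm{RCF}^* = \mathrm{Th}(\mathcal{R}_Q)$), and these are sound for the intended interpretation because $\mathcal{R}^X_Q$ is by construction a model of $\mathrm{RCF}^*$ and, whenever $\mathcal{A}\models T$, the structure $\mathcal{A}$ is a model of any $L_0$-consequence of $T$. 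The only genuinely new things to check are the three axioms $(A_0)$–$(A_2)$ and the rule $(R_0)$ of $\vdash_2$: axioms $(A_0)$–$(A_2)$ are immediate from the definition of $[\,\cdot\,]_X$ as a probability measure (finite additivity and $P(\Omega)=1$, $P(\emptyset)=0$), and $(R_0)$ holds because if $\mathcal{A}\models \forall x(\phi^i_j(x)\to\psi^i_j(x))$ then $\{i\in\Omega: \mathcal{A}\models_{\tau(i)}\phi^i_j\}\subseteq\{i\in\Omega:\mathcal{A}\models_{\tau(i)}\psi^i_j\}$, so monotonicity of $P$ gives $[\phi^i_j(x)]_X\leq[\psi^i_j(x)]_X$, and this survives taking conjunctions and disjunctions of the relevant inequalities.

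For the converse direction $(T,\Sigma)\models\alpha \Rightarrow (T,\Sigma)\vdash\alpha$, I would argue by contraposition: suppose $(T,\Sigma)\nvdash\alpha$. Then, since $\alpha$ is a single $L_1$-formula, $(T, \Sigma\cup\{\neg\alpha\})\nvdash\perp$, where $\neg\alpha$ is again a finite $L_1$-theory. We now want to apply the finite version of the Completeness Theorem (the theorem immediately above, with $\Sigma$ finite and arbitrary), which yields $\mathcal{A}\models T$ and a measure team $X$ with values in $\mathcal{A}$ and $\mathrm{dom}(X)=\{v_i\mid i<n\}$ such that $X\models\Sigma\cup\{\neg\alpha\}$. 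But then $X\models\Sigma$ while $X\not\models\alpha$, which directly witnesses $(T,\Sigma)\nmodels\alpha$. This completes the contrapositive.

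The one point requiring a little care is the reduction of $(T,\Sigma)\nvdash\alpha$ to the consistency of $(T,\Sigma\cup\{\neg\alpha\})$. Here one uses that $\vdash_1$ contains full first-order logic over $L_1$, so the deduction theorem is available for the $L_1$-part: if $(T,\Sigma\cup\{\neg\alpha\})\vdash\perp$ then $(T,\Sigma)\vdash\neg\neg\alpha$ and hence $(T,\Sigma)\vdash\alpha$. (If $\alpha$ happened to be of a shape making $\neg\alpha$ not positive bounded, that is no obstacle, precisely because we are invoking the finite, not-necessarily-positive-bounded version of the theorem, where $\Sigma\cup\{\neg\alpha\}$ is still finite.) With both directions in hand the biconditional follows, so I expect the main obstacle to be merely the bookkeeping in the soundness induction — in particular making sure $(R_0)$ is correctly handled in its full $\bigvee\bigwedge$ generality — rather than any conceptual difficulty.
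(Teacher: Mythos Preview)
Your proposal is correct and is exactly the argument the paper has in mind: the corollary is stated without proof, as an immediate consequence of the preceding finite-$\Sigma$ completeness theorem together with soundness, via the standard contrapositive reduction $(T,\Sigma)\nvdash\alpha \Rightarrow (T,\Sigma\cup\{\neg\alpha\})\nvdash\perp$. Your handling of the deduction-theorem step (noting that $\neg\alpha$ enters only through the first-order component $\vdash_1$) and of the soundness of $(R_0)$ in its full $\bigvee\bigwedge$ form is accurate.
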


	The main source of inspiration for our logic is of course for $T = T_0 = \mathrm{Th}(\mathcal{A})$, with $\mathcal{A}$ a particular $L_0$-structure. If we wish the class of teams with values in $\mathcal{A}$ to be complete in the sense of providing every possible counterexample for $(T, \Sigma) \nvdash \perp$ (as in Theorem \ref{main_theorem}), then we have to require $\omega$-compactness or $\omega$-saturation (depending on whether $n < \omega$ or $n = \omega$) of $\mathcal{A}$. If $\mathcal{A}$ is finite then, of course, we do not have this problem (since it is $\omega$-saturated). In particular, for $L_0$ the signature of boolean algebras and $\mathcal{A}$ the boolean algebra $\left\{ 0, 1 \right\}$, we have a system of propositional probability logic properly extending the probability logic considered in \cite{QTL}. 



\end{document}